\title{Urns with simultaneous drawing}
\date {\today}
\author{Micka{\"{e}}l Launay}
\definecolor{jaune} {cmyk}{0,0.4,1,0}%
\definecolor{bleu} {cmyk}{1,0,0,0}%
\definecolor{darkred} {rgb}{.4,0,0}%
\definecolor{darkblue} {rgb}{0,0,.4}%
\definecolor{lightred} {rgb}{1,.7,.7}%
\definecolor{lightblue} {rgb}{.7,.7,1}%
\definecolor{redb} {rgb}{.5,1,.1}%
\definecolor{greenb} {rgb}{.5,.0,.0}%
\newcommand{\N}{\mathbb{N}}
\newcommand{\R}{\mathbb{R}}
\newcommand{\Z}{\mathbb{Z}}
\newcommand{\Prob}{\mathbb{P}}
\newcommand{\E}{\mathbb{E}}
\begin{document}

\newtheorem{lemme}{Lemma}[section]
\newtheorem{theorem}[lemme]{Theorem}
\newtheorem{prop}[lemme]{Proposition}
\newtheorem{coro}[lemme]{Corollaire}
\newtheorem{conj}[lemme]{Conjecture}
\newtheorem{defi}[lemme]{Definition}
\newtheorem{ex}[lemme]{Exemples}

\maketitle

\begin{abstract}
In classical urn models, one usually draws one ball with replacement at each time unit and then adds one ball of the same colour. Given a weight sequence $(w_k)_{k\in\N}$, the probability of drawing a ball of a certain colour is proportional to $w_k$ where $k$ is the number of balls of this colour. A classical result states that an urn fixates on one colour after a finite time if an only if $\sum_{0}^\infty w_k^{-1} < \infty$. In this paper we shall study the case when at each time unit we draw with replacement a number $d\in\N$ of balls and then add $d$ new balls of matching colours. The main goal is to prove that the result in the case of maximal interaction generalizes assuming in addition that $(w_k)_{k\in\N}$ is non-decreasing.
\end{abstract}

\section{Introduction}

\subsection{The model}

The history of reinforced urn processes starts in 1930, when G. P\'{o}lya introduced a now famous model to study the spread of an epidemic \cite{polya}. The reader curious of the multitude of different urn processes that have been studied in the last decades should refer to the 2007 survey by R. Pemantle
\cite{pemantle_survey}. The process we are going to study in this paper is both a generalisation of P\'{o}lya's original process and a particular case of a model of interacting urns introduced by the author in \cite{IUM} in 2010 (see also Remark 1).

Fix $d$ in $\N$ the number of balls to be added to the urn at each step.

\begin{defi}
A sequence $(x_n)_{n\in\N\cup\{0\}}=(r_n,g_n)_{n\in\N\cup\{0\}}\in ((\N\cup\{0\})^2)^{\N\cup\{0\}}$ is called an urn path (with parameter $d$) if it satisfies:
\begin{itemize}
\item $x_0 = (r_0,g_0) = (0,0)$ ;
\item $r_n$ and $g_n$ are non-decreasing ;
\item $\forall n\in\N\cup\{0\}$, $r_n+g_n=dn$ or equivalently $(r_{n+1}+g_{n+1})-(r_n+g_n)=d$.
\end{itemize}
Here $r_n$ and $g_n$ should be interpreted respectively as the number of red and green balls in the urn.
\end{defi}

Here is an illustration of urn path with $d=5$ up to $n=3$:
\begin{center}
	\begin{pspicture}(0,-.6)(11.6,1.8)
			\psline[border=0pt](0,1.8)(0,0)
			\psline[border=0pt](0,0)(2.6,0)
			\psline[border=0pt](2.6,0)(2.6,1.8)
			
			\psline[border=0pt](3.0,1.8)(3.0,0)
			\psline[border=0pt](3.0,0)(5.6,0)
			\psline[border=0pt](5.6,0)(5.6,1.8)

			\psline[border=0pt](6,1.8)(6,0)
			\psline[border=0pt](6,0)(8.6,0)
			\psline[border=0pt](8.6,0)(8.6,1.8)
			
			\psline[border=0pt](9,1.8)(9,0)
			\psline[border=0pt](9,0)(11.6,0)
			\psline[border=0pt](11.6,0)(11.6,1.8)

			\pscircle[fillstyle=solid,fillcolor=redb, linewidth=1pt,linecolor=black](3.3,.3){.2}
			\pscircle[fillstyle=solid,fillcolor=greenb, linewidth=1pt,linecolor=black](3.8,.3){.2}
			\pscircle[fillstyle=solid,fillcolor=greenb, linewidth=1pt,linecolor=black](4.3,.3){.2}
			\pscircle[fillstyle=solid,fillcolor=redb, linewidth=1pt,linecolor=black](4.8,.3){.2}
			\pscircle[fillstyle=solid,fillcolor=redb, linewidth=1pt,linecolor=black](5.3,.3){.2}
						
			\pscircle[fillstyle=solid,fillcolor=redb, linewidth=1pt,linecolor=black](6.3,.3){.2}
			\pscircle[fillstyle=solid,fillcolor=greenb, linewidth=1pt,linecolor=black](6.8,.3){.2}
			\pscircle[fillstyle=solid,fillcolor=greenb, linewidth=1pt,linecolor=black](7.3,.3){.2}
			\pscircle[fillstyle=solid,fillcolor=redb, linewidth=1pt,linecolor=black](7.8,.3){.2}
			\pscircle[fillstyle=solid,fillcolor=redb, linewidth=1pt,linecolor=black](8.3,.3){.2}

			\pscircle[fillstyle=solid,fillcolor=redb, linewidth=1pt,linecolor=black](9.3,.3){.2}
			\pscircle[fillstyle=solid,fillcolor=greenb, linewidth=1pt,linecolor=black](9.8,.3){.2}
			\pscircle[fillstyle=solid,fillcolor=greenb, linewidth=1pt,linecolor=black](10.3,.3){.2}
			\pscircle[fillstyle=solid,fillcolor=redb, linewidth=1pt,linecolor=black](10.8,.3){.2}
			\pscircle[fillstyle=solid,fillcolor=redb, linewidth=1pt,linecolor=black](11.3,.3){.2}

			\pscircle[fillstyle=solid,fillcolor=greenb, linewidth=1pt,linecolor=black](6.3,.8){.2}
			\pscircle[fillstyle=solid,fillcolor=redb, linewidth=1pt,linecolor=black](6.8,.8){.2}
			\pscircle[fillstyle=solid,fillcolor=redb, linewidth=1pt,linecolor=black](7.3,.8){.2}
			\pscircle[fillstyle=solid,fillcolor=redb, linewidth=1pt,linecolor=black](7.8,.8){.2}
			\pscircle[fillstyle=solid,fillcolor=redb, linewidth=1pt,linecolor=black](8.3,.8){.2}

			\pscircle[fillstyle=solid,fillcolor=greenb, linewidth=1pt,linecolor=black](9.3,.8){.2}
			\pscircle[fillstyle=solid,fillcolor=redb, linewidth=1pt,linecolor=black](9.8,.8){.2}
			\pscircle[fillstyle=solid,fillcolor=redb, linewidth=1pt,linecolor=black](10.3,.8){.2}
			\pscircle[fillstyle=solid,fillcolor=redb, linewidth=1pt,linecolor=black](10.8,.8){.2}
			\pscircle[fillstyle=solid,fillcolor=redb, linewidth=1pt,linecolor=black](11.3,.8){.2}

			\pscircle[fillstyle=solid,fillcolor=greenb, linewidth=1pt,linecolor=black](9.3,1.3){.2}
			\pscircle[fillstyle=solid,fillcolor=greenb, linewidth=1pt,linecolor=black](9.8,1.3){.2}
			\pscircle[fillstyle=solid,fillcolor=redb, linewidth=1pt,linecolor=black](10.3,1.3){.2}
			\pscircle[fillstyle=solid,fillcolor=greenb, linewidth=1pt,linecolor=black](10.8,1.3){.2}
			\pscircle[fillstyle=solid,fillcolor=redb, linewidth=1pt,linecolor=black](11.3,1.3){.2}

			\rput(1.3,-.5){{$n=0$}}
			\rput(4.3,-.5){{$n=1$}}
			\rput(7.3,-.5){{$n=2$}}
			\rput(10.3,-.5){{$n=3$}}

	\end{pspicture}
	\end{center}

Let us now endow the set of all urn paths with a particular dynamics thus defining a stochastic process. Fix $(w_i)_{i\in\N\cup\{0\}}\in(\R_+)^{\N\cup\{0\}}$ the {\em reinforcement weight sequence} and define the quantity $\pi(r,g):=\frac{w_r}{w_r+w_g}$ which we shall understand as the probability of drawing a red ball among $r$ red and $g$ green balls. Note that $\pi(r,g)+\pi(g,r)=1$. For $n\geq 1$, define the increment of red (resp.~green) balls at time $n$ by $\Delta r_n=r_n-r_{n-1}$ (resp. $\Delta g_n=g_n-g_{n-1}$).
Then the dynamics is defined as follows: $X=(R_n,G_n)_{n\in\N}$ is a Markov chain with $X_0=(0,0)$ and the transition law
\[
\left\{
\begin{array}{l}
\Prob\left[\left.\Delta R_{n+1}=a\,\right|X_n=(r,g)\right]\mathbb{1}_{\{X_n=(r,g)\}}={d \choose a}\pi(r,g)^a\pi(g,r)^{d-a}\mathbb{1}_{\{X_n=(r,g)\}},~\forall a, 0\leq a\leq d ;\\
\Delta G_{n+1}=d-\Delta R_{n+1}.
\end{array}
\right.
\]
It is clear that $X$ is an urn path with parameter $d$ a.s.
In words, $\Delta R_{n+1}$ (and $\Delta G_{n+1}$) follows a binomial distribution $B(d,\pi(r,g))$ conditionally on $X_n=(r,g)$: each of the $d$ balls added at time $n$ is independently red with probability $\pi(R_n,G_n)$ and green with probability $\pi(G_n,R_n)$.

For $n\in\N$, denote by $\mathcal{F}_n$ the $\sigma$-field generated by the $n$ first steps:
$$\mathcal{F}_n =\sigma\left(X_0,X_1,\dots,X_n\right).$$

This model can be linked to multiple particles {\em Reinforced Random Walks}. Consider the following star shaped graph:
\begin{center}
	\begin{pspicture}(-1,-1)(1,1)
			\psline[linewidth=1pt]{*-*}(0,0)(1,0)
			\psline[linewidth=1pt]{*-*}(0,0)(-1,0)
			\psline[linewidth=1pt]{*-*}(0,0)(0,-1)
			\psline[linewidth=1pt]{*-*}(0,0)(0,1)
			\psline[linewidth=1pt]{*-*}(0,0)(.7,.7)
			\psline[linewidth=1pt]{*-*}(0,0)(.7,-.7)
			\psline[linewidth=1pt]{*-*}(0,0)(-.7,.7)
			\psline[linewidth=1pt]{*-*}(0,0)(-.7,-.7)
	\end{pspicture}
	\end{center}
Suppose that there are $d$ particles on the central vertex and at each step each particle jumps over one of the edges with probability proportional to $w_i$, where $i$ is the number of time the edge has been traversed by one of the particles since the begining, and then jumps back to the central vertex. This dynamic is equivalent to an urn process in which $d$ balls are added to the urn at each step and the balls could be of $E$ different colours where $E$ is the number of edges (or the number of leaf). In this paper we will limit ourselves to two different colours, that is $E=2$:
\begin{center}
	\begin{pspicture}(-1,0)(1,0)
			\psline[linewidth=1pt]{*-*}(0,0)(1,0)
			\psline[linewidth=1pt]{*-*}(0,0)(-1,0)
	\end{pspicture}
	\end{center}
	
\noindent \textbf{Remark 1.} This model is equivalent to the {\em Interacting Urn Model} \cite{IUM} with $d$ urns in the case when the memory sharing is maximal, that is the correlation probability $p=1$. In that setting all the $d$ urns always draw their balls in the $d$ urns combined. Therefore, Theorem \ref{theoreme} answers an open question of \cite{IUM}.

For the proofs it will be convenient to use a finer time indexing in which only one ball is added at each step. For this time indexing we shall reserve the superscript $k$ and an additional tilde: denote by $(\tilde{X}_k)_{k\in\N\cup\{0\}}=(\tilde{R}_k,\tilde{G}_k)_{k\in\N\cup\{0\}}$ a random urn path with parameter $d=\tilde{d}=1$ defined by
\[
\left\{
\begin{array}{l}
\begin{split}
\Prob\left[\left.\Delta \tilde{R}_{k+1}=\delta\right|(\tilde{X}_\ell)_{\ell\leq k}=(\tilde{x}_\ell)_{\ell\leq k}\right]=\pi(\tilde{r}_{\lfloor k+1\rfloor_d},\tilde{g}_{\lfloor k+1\rfloor_d})^\delta\pi(\tilde{g}_{\lfloor k+1\rfloor_d},\tilde{r}_{\lfloor k+1\rfloor_d})^{1-\delta},\\
\text{on }\{(\tilde{X}_\ell)_{\ell\leq k}=(\tilde{x}_\ell)_{\ell\leq k}\}\text{ for }\delta = 0\text{ or }1 ;
\end{split}\\
\Delta \tilde{G}_{k+1}=1-\Delta \tilde{R}_{k+1},
\end{array}
\right.
\]
for $k\geq 1$ where
$$\lfloor k \rfloor_d = \max\{\ell \in d\Z :\ell< k\}=d\left\lfloor\frac{k-1}{d}\right\rfloor.$$

In words, at each time $k$ we add a ball that is drawn in the urn using the configuration at time $\lfloor k \rfloor_d$, which is the last time when the number of balls in the urn was a multiple of $d$. Note that $\lfloor k \rfloor_1 = k-1$ (and not $k$), we prefer to use this notation since it matches well the dynamics described above.

Then the process $X=(\tilde{R}_{dn},\tilde{G}_{dn})_{n\in\N}$ follows the law of the urn process with parameter $d$ previously defined. The proof of this fact is left to the reader.

\section{First results}
An interesting hypothesis about the reinforcement weight sequence is the {\em Strong Reinforcement Hypothesis}:
\begin{equation}
\label{SRH}
\tag{SRH}
s_{\infty}:=\sum_{i=0}^{\infty}\frac{1}{w_i}<\infty.
\end{equation}
The reason why this hypothesis is natural to consider is the following result of Davis \cite{davis}. Denote by $A$ the event
\begin{equation}
A:=\left\{
\begin{split}
\text{There exists a time } n \text{ such that after } n \text{ all the balls}\\ \text{drawn out of the urn have the same colour}
\end{split}
\right\}.
\end{equation}
Then we have the following proposition.
\begin{prop}(Davis, \cite{davis})
If $d=1$, then $\mathbb{P}\left[A\right]=1$ if (\ref{SRH}) is satisfied and $\mathbb{P}\left[A\right]=0$ otherwise.
\end{prop}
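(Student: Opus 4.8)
The plan is to prove this through Rubin's exponential embedding, which realises the $d=1$ urn as the superposition of two independent families of exponential clocks. First I would introduce two independent sequences $(\xi^R_k)_{k\geq 0}$ and $(\xi^G_k)_{k\geq 0}$ of independent variables with $\xi^R_k,\xi^G_k\sim\mathrm{Exp}(w_k)$, and set $S^R_n=\sum_{k=0}^{n-1}\xi^R_k$ and $S^G_n=\sum_{k=0}^{n-1}\xi^G_k$, interpreting $S^R_n$ as the real time at which the $n$-th red ball is added and $S^G_n$ similarly for green. Pooling and ordering the arrival times $\{S^R_n\}\cup\{S^G_n\}$ produces a discrete sequence of coloured balls; by the lack-of-memory property of the exponential, when the configuration is $(r,g)$ the residual waiting times to the next red and to the next green ball are independent and distributed as $\mathrm{Exp}(w_r)$ and $\mathrm{Exp}(w_g)$, so the next ball is red with probability $w_r/(w_r+w_g)=\pi(r,g)$. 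Hence the embedded process has exactly the law of the urn $X$, and it suffices to analyse the embedding.

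Next I would reduce the event $A$ to a comparison of the total clock sums $S^R_\infty=\sum_{k=0}^\infty\xi^R_k$ and $S^G_\infty=\sum_{k=0}^\infty\xi^G_k$, both valued in $(0,\infty]$. The key observation is geometric: if $S^R_\infty<S^G_\infty$ with $S^R_\infty<\infty$, then the red arrival times accumulate at the finite instant $S^R_\infty$, while only finitely many green arrivals lie in $[0,S^R_\infty)$ (since $S^G_n\uparrow S^G_\infty>S^R_\infty$). Consequently, after finitely many draws the pooled ordering emits only red balls, i.e. the urn fixates on red; symmetrically $S^G_\infty<S^R_\infty$ gives fixation on green, whereas if $S^R_\infty=S^G_\infty=\infty$ both colours recur infinitely often and $A$ fails. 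Thus, outside the null event $\{S^R_\infty=S^G_\infty<\infty\}$, the event $A$ coincides with $\{\min(S^R_\infty,S^G_\infty)<\infty\}$.

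Finally I would dispatch the two regimes via the Kolmogorov three-series theorem. Under (\ref{SRH}) one has $\E[S^R_\infty]=\sum_{k}1/w_k=s_\infty<\infty$, so $S^R_\infty<\infty$ and likewise $S^G_\infty<\infty$ almost surely; since $S^R_\infty$ and $S^G_\infty$ are independent with absolutely continuous laws they differ almost surely, and the reduction of the previous step yields fixation, so $\Prob[A]=1$. When (\ref{SRH}) fails I would instead check that $\sum_k\E[\min(\xi^R_k,1)]=\sum_k(1-e^{-w_k})/w_k=\infty$, using $(1-e^{-w})/w\geq(1-e^{-1})\min(1,1/w)$ to reduce to $\sum_k\min(1,1/w_k)=\infty$; the three-series theorem then gives $S^R_\infty=S^G_\infty=\infty$ almost surely, both colours recur, and $\Prob[A]=0$.

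The main obstacle is the rigorous justification of the embedding: one must verify that pooling and ordering the two independent exponential-clock processes reproduces the Markovian transition law exactly, which hinges on memorylessness together with the fact that, on $\{\min(S^R_\infty,S^G_\infty)<\infty\}$, the pooled arrival times form an almost surely distinct set of order type $\omega$ (each point has finitely many predecessors), so the discrete-time enumeration is well defined. A secondary subtlety lies in the divergent regime: establishing $S^R_\infty=\infty$ almost surely when $s_\infty=\infty$ must be done without assuming $w_k\to\infty$, which is precisely why the truncated criterion $\sum_k\E[\min(\xi^R_k,1)]=\infty$, rather than $\sum_k\E[\xi^R_k]=\infty$, is the correct tool.
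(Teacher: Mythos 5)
Your proposal is correct and is precisely the ``continuous time construction'' that the paper itself points to for this proposition (the exponential embedding of Rubin, as presented in Davis's paper and in Pemantle's survey): the memoryless clocks reproduce the transition law $\pi(r,g)=w_r/(w_r+w_g)$, fixation is equivalent to $\min(S^R_\infty,S^G_\infty)<\infty$ off a null set, and the truncated three-series criterion correctly handles the divergent regime without assuming $w_k\to\infty$. The only cosmetic imprecision is that on the explosion event the \emph{full} pooled set is not of order type $\omega$ --- only its initial segment below the first accumulation point is, which is exactly what makes the enumeration eventually monochromatic.
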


An elegant proof of this proposition using a continuous time construction of urn processes can be found in \cite{davis} or \cite{pemantle_survey}.

It is therefore natural to wonder whether such a result is true when $d\geq 2$. It is believable that the following conjecture is true but difficult to prove.

\begin{conj}
\label{conj}
For $d\geq 2$, if (\ref{SRH}) is satisfied then $\mathbb{P}\left[A\right]=1$.
\end{conj}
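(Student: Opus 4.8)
The plan is to reduce the conjecture to the non-coincidence of two explosion times in a continuous-time version of the model, and then to confront the coincidence event directly. Since $R_n$ and $G_n$ are non-decreasing with $R_n+G_n=dn$, each converges in $\N\cup\{\infty\}$ and at least one limit is infinite; moreover $A=\{R_\infty<\infty\}\cup\{G_\infty<\infty\}$, a disjoint union, so $\Prob[A]=1$ is equivalent to $\Prob[A^c]=\Prob[R_\infty=\infty,\,G_\infty=\infty]=0$. In words, I must show that almost surely one of the two colours is drawn only finitely often. Writing $\pi_n:=\pi(R_n,G_n)$ and using the elementary bounds $1-\pi_n\le 1-\pi_n^d\le d(1-\pi_n)$ for the conditional probability of drawing green during block $n+1$, the conditional Borel--Cantelli lemma shows that green is drawn infinitely often if and only if $\sum_n(1-\pi_n)=\infty$, and symmetrically for red; thus $A^c$ is the event that both $\sum_n\pi_n$ and $\sum_n(1-\pi_n)$ diverge.

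Next I would set up a continuous-time embedding adapted to the blocked dynamics, in the spirit of the Rubin construction behind Davis' proposition. Run a two-type birth process started at $(0,0)$ in which, during the $(n+1)$-st block (i.e.\ while the combined count lies in $[dn,d(n+1))$), the red coordinate increases by $1$ at constant rate $w_{R_n}$ and the green coordinate at constant rate $w_{G_n}$, the pair $(R_n,G_n)$ being refreshed each time the combined count reaches a new multiple of $d$. By memorylessness each of the $d$ increments of a block is red with probability $w_{R_n}/(w_{R_n}+w_{G_n})=\pi_n$ independently, so reading the colours at the jump times reproduces exactly the law of the $d$-process. The block durations $S_{n+1}$ are sums of $d$ independent $\mathrm{Exp}(w_{R_n}+w_{G_n})$ variables, with predictable compensator $\E[S_{n+1}\mid\mathcal F_n]=d/(w_{R_n}+w_{G_n})$. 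The point is that on $A^c$ the total time to explosion is finite: one telescopes the window estimate
\[
\sum_{n\ge0}\frac{\Delta R_{n+1}}{w_{R_n}}\le\sum_{k\ge0}\ \sum_{j=k-d+1}^{k}\frac1{w_j}\le d\,s_\infty
\]
(valid on $\{R_\infty=\infty\}$ because then every level $k$ is visited, each $j\ge0$ being counted at most $d$ times), together with the analogous bound for green, to get $\sum_n 1/(w_{R_n}+w_{G_n})\le\frac1d\sum_n(\Delta R_{n+1}/w_{R_n}+\Delta G_{n+1}/w_{G_n})\le 2s_\infty<\infty$. Hence the compensator $\sum_n\E[S_{n+1}\mid\mathcal F_n]$ is a.s.\ finite on $A^c$, so the explosion time $T=\sum_n S_n$ is a.s.\ finite there. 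Consequently $A^c$ is contained in the event that red and green both accumulate infinitely many jumps at the \emph{same} finite instant $T$, and the conjecture reduces to showing that this simultaneous explosion has probability zero.

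This last step is the crux, and it is exactly where the blocked dynamics resist. In Davis' case $d=1$ the two colours carry \emph{independent} clock sequences whose $i$-th rate is $w_i$, depending only on that colour's own count; the two explosion times are then independent with diffuse laws and coincide with probability zero, so the smaller one fixates the other. Here, by contrast, a block boundary is triggered by the \emph{combined} count hitting a multiple of $d$, so the level at which a colour's rate is evaluated, and the instants at which it is refreshed, depend on the other colour's entire history: the two clocks are genuinely coupled and their explosion times are not independent. Worse, without monotonicity of $(w_k)$ the weight ratios over windows of length $d$ are completely uncontrolled, so one cannot recover the independent Davis clocks by a bounded random time change, nor rule out that an adversarial oscillation of $(w_k)$ synchronises the two explosions. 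I expect the decisive input to be a conditional non-atomicity statement --- that given the whole trajectory of one colour the explosion time of the other still has a non-atomic law --- or a suitable zero--one law for the coincidence event; producing such an argument in the absence of any regularity of $(w_k)$ is the main obstacle, and it is precisely this coupling that the non-decreasing hypothesis neutralises in Theorem~\ref{theoreme}.
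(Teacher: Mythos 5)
You have not proved the statement, and you could not have matched the paper's proof, because the paper contains none: this statement is precisely Conjecture \ref{conj}, which the author explicitly leaves open, proving only the weaker Theorem \ref{theoreme} under the additional hypothesis that $(w_k)$ is non-decreasing. Your reduction steps are sound as far as they go. The identification $A^c=\{R_\infty=\infty,\,G_\infty=\infty\}$, the conditional Borel--Cantelli characterisation via $1-\pi_n\le 1-\pi_n^d\le d(1-\pi_n)$, the block-frozen two-type birth process embedding, and the window estimate $\sum_n \Delta R_{n+1}/w_{R_n}\le d\,s_\infty$ on $\{R_\infty=\infty\}$ are all correct; indeed the counting device behind that estimate (each $1/w_i$ appears at most $d$ times because rates are refreshed only at multiples of $d$) is the same one the paper uses when it bounds $\sum_i |N_{i+1}^{T_k}-N_i^{T_k}|$ by $2ds_\infty$ in the proof of Lemma \ref{lemme2}. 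So you correctly reduce the conjecture to: the two coupled explosion times coincide with probability zero.

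But that last step is the entire content of the conjecture, and your final paragraph is an honest admission of defeat rather than an argument. In Davis' $d=1$ case the Rubin construction makes the two colour clocks independent with diffuse explosion-time laws, and non-coincidence is immediate; for $d\ge 2$ your embedding refreshes each colour's rate at instants governed by the \emph{combined} count, so independence is irreducibly lost, and no conditional non-atomicity statement is supplied or, at present, known without regularity of $(w_k)$. Your diagnosis of where monotonicity enters is accurate and worth noting: in the paper's proof of Theorem \ref{theoreme}, the hypothesis that $w$ is non-decreasing is used exactly once but crucially, to make the series $\sum_\ell \left(1/w_{\tilde{R}_{d\ell}}-1/w_{\tilde{R}_{d\ell+i-1}}\right)$ alternate with non-increasing absolute terms, yielding the uniform bound $\sup_k\left|M^{k_0}_{k_0+k}-N_{k_0+k}\right|\le 2d/w_{X_{k_0}}$ that tames the block-level discrepancy you call the coupling. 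So: your partial work is correct and your assessment of the obstruction is the right one, but the proposal proves the theorem's hypothesis-free strengthening no more than the paper does, and it should be presented as a reduction plus an open problem, not as a proof.
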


The main result of this paper is the weaker result:

\begin{theorem}
\label{theoreme}
For $d\geq 2$,  if (\ref{SRH}) is satisfied and $w$ is non-decreasing then $\mathbb{P}\left[A\right]=1$.
\end{theorem}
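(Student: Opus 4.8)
The plan is to reduce the statement to a symmetric dichotomy and then to adapt the continuous-time mechanism behind Davis' proposition. First I would pass to the fine time indexing $(\tilde R_k,\tilde G_k)$ and observe that $A^{c}$ is exactly the event $\{\tilde R_k\to\infty \text{ and } \tilde G_k\to\infty\}$: fixation fails precisely when both colours are drawn infinitely often. Since $\pi(r,g)+\pi(g,r)=1$ and the reinforcement weight sequence is common to both colours, the model is invariant under exchanging red and green, so it suffices to prove $\Prob[A^{c}]=0$. Applying Lévy's extension of the Borel--Cantelli lemma to the filtration $(\mathcal{F}_k)$, green is drawn infinitely often if and only if $\sum_k \pi(\tilde g_{\lfloor k\rfloor_d},\tilde r_{\lfloor k\rfloor_d})=\infty$, and symmetrically for red; hence on $A^{c}$ both of these series diverge. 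The monotonicity hypothesis enters already here: because $w$ is non-decreasing, $\pi(r,g)$ is non-decreasing in $r$ and non-increasing in $g$, so any lead is self-reinforcing.

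The heart of the argument is a Rubin-type continuous-time construction adapted to simultaneous drawing. I would attach to each colour an independent family of exponential clocks, the $i$-th increment of a colour having rate $w_i$, exactly as in the proof of Davis' proposition in \cite{davis}. For $d=1$ the two competing pure-birth processes explode at finite times (their means equal $s_\infty<\infty$), and the colour that explodes first captures all but finitely many later draws, which is fixation. The genuinely new feature for $d\geq 2$ is that inside a block of $d$ consecutive elementary draws the rates are \emph{frozen} at the configuration of the last multiple of $d$: during block $n+1$, red rings at rate $w_{R_n}$ and green at rate $w_{G_n}$ until $d$ rings have occurred, which reproduces the law $B(d,\pi(R_n,G_n))$ of the increment.

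This is where the non-decreasing hypothesis does the essential work. Within a block at most $d$ balls are added, so the frozen index differs from the current one by at most $d$; since $w$ is non-decreasing, the frozen rate is squeezed between current rates whose indices are shifted by at most $d$. I would use this to sandwich the frozen birth processes between honest Rubin birth processes built from the shifted weight sequences $i\mapsto w_{(i-d)_+}$ and $i\mapsto w_{i+d}$. The decisive point is that shifting the index by the fixed amount $d$ preserves (\ref{SRH}): indeed $\sum_i 1/w_{(i-d)_+}=s_\infty+d/w_0<\infty$ and $\sum_i 1/w_{i+d}\leq s_\infty<\infty$, so both comparison processes still explode in finite time almost surely, and their explosion times are a.s.\ distinct. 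Because the sandwich is two-sided and the resulting conclusion is the symmetric dichotomy ``whichever colour explodes first wins,'' one never has to single out a colour in advance, which is what makes the comparison robust even though each colour is individually slowed by the freezing.

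Concretely, on $A^{c}$ both colours ring infinitely often, so the whole $d$-process must take place within the total elapsed continuous time, whose conditional mean is controlled by $\sum_n d/(w_{R_n}+w_{G_n})$ and is finite by (\ref{SRH}) once both counts grow; yet the sandwich shows that a.s.\ one colour explodes strictly before the other, so that the slower colour has accrued only finitely many rings up to the first explosion time and is then frozen --- contradicting the divergence of both series. This would give $\Prob[A^{c}]=0$ and hence $\Prob[A]=1$. I expect the main obstacle to be exactly the compatibility of the exponential embedding with the frozen, simultaneous draws: unlike the case $d=1$, the per-block rates are not the current reinforcement rates, and one must verify that the bounded lag --- tamed by the monotonicity of $w$ and the shift-invariance of (\ref{SRH}) --- truly prevents the minority colour from being drawn infinitely often, rather than merely slowing both colours down together.
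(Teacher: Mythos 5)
Your continuous-time embedding of the block dynamics is itself legitimate: running two exponential clocks at the frozen rates $w_{R_n}$ and $w_{G_n}$ until $d$ rings occur does reproduce the binomial increment law, and the sandwich $w_{(i-d)_+}\leq w_{R_n}\leq w_{i-1}$ (valid while waiting for the $i$-th red ring, by monotonicity of $w$) does confine each colour's explosion time to a finite random interval. The gap is in the step ``the sandwich shows that a.s.\ one colour explodes strictly before the other.'' In Davis' argument for $d=1$ that dichotomy comes from the fact that the two time-changed birth processes are \emph{independent}, so their explosion times are independent random variables with continuous laws, hence a.s.\ distinct, and the first to explode freezes the other at a finite count. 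For $d\geq 2$ the frozen rates destroy exactly this independence: red's instantaneous rate is $w_{R_n}$, where the block index $n$ advances according to the total number of rings of \emph{both} colours, so red's counting process is not an autonomous pure-birth process and $\tau_R$, $\tau_G$ are dependent. Your two-sided comparison only yields $\underline{\tau}_R\leq\tau_R\leq\overline{\tau}_R$ and $\underline{\tau}_G\leq\tau_G\leq\overline{\tau}_G$ with the bounding variables independent across colours; since the two bracketing intervals overlap with positive probability, no ordering of $\tau_R$ and $\tau_G$ follows, and nothing rules out the two dependent processes exploding at the same instant with both colours ringing infinitely often beforehand --- which is precisely the event $A^c$ you must exclude. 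The remark that the total elapsed time has finite conditional mean on $A^c$ is consistent with this scenario and yields no contradiction.

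This missing dichotomy is the real content of the theorem, and the paper does not try to repair the exponential embedding; it runs a martingale second-moment argument in the spirit of Limic and Tarr\`es instead. It introduces $N_k=\sum_{i=1}^{\tilde R_k}w_{i-1}^{-1}-\sum_{i=1}^{\tilde G_k}w_{i-1}^{-1}$, whose convergence to $0$ characterizes $A^c$, together with a genuine martingale $M_k$ built from the frozen weights. Lemma \ref{lemme2} shows by a variance computation that with conditional probability at least $1/12$ the process $N$ either fails to tend to $0$ or exits $[-\sqrt{\alpha B_k},\sqrt{\alpha B_k}]$; the monotonicity of $w$ bounds $\sup_k\left|M^{k_0}_{k_0+k}-N_{k_0+k}\right|$ by $2d/w_{X_{k_0}}$; and an optional-stopping estimate on $M^{T(k)}$ then keeps $N_\infty$ away from $0$, with a separate elementary computation in the strongly reinforced regime where $2d/w_{X_k}\geq\sqrt{\alpha B_k}/2$. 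To salvage your route you would need a substitute for the independence of the explosion times, and it is not clear one exists; this obstruction is essentially why Conjecture \ref{conj} remains open without the monotonicity assumption.
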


The proof of this theorem will be given in the next section. It is actually an adaptation of an argument given by Limic and Tarr\`es in \cite{limictarres} showing that a reinforced random walk on some general graph is eventually attracted by a single edge, assuming that the reinforcement sequence satisfies the (\ref{SRH}) and additional technical assumptions including the non-decreasing case.

Note that for $d\geq 2$ the reciprocal of Conjecture \ref{conj} is not true since we have the following proposition.

\begin{prop}
\label{lemme1}
If $d\geq 2$, there exist a reinforcement weight sequence which does not satisfy (\ref{SRH}) but such that $A$ happens almost surely.
\end{prop}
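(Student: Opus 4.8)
The plan is to exhibit an explicit weight sequence for which the nondecreasing hypothesis fails; this failure is the whole point, since it lets one arithmetic progression of indices carry almost all of the reinforcement while the reciprocal weights still sum to infinity along the complementary progressions. For $d\geq 2$ I would take
\[
w_i=\left\{\begin{array}{ll}2^{i/d}&\text{if }d\mid i,\\ 1&\text{otherwise.}\end{array}\right.
\]
Then $\sum_i w_i^{-1}=\infty$, because $w_i=1$ for the infinitely many $i$ with $i\equiv 1\pmod d$, so (\ref{SRH}) fails; on the other hand the reciprocals are summable along the ``good'' residue class, $\sum_{d\mid i}w_i^{-1}=\sum_{j\geq0}2^{-j}<\infty$.

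The main step is a uniform lower bound: there exists $\epsilon>0$ such that $\Psi(x):=\mathbb{P}_x[A]\geq\epsilon$ for every state $x$ reachable from the origin, where $\mathbb{P}_x$ denotes the law of the chain started at $x$. Granting this, the proof concludes quickly. Since $A$ is a tail event, the Markov property gives $\mathbb{P}[A\mid\mathcal{F}_n]=\Psi(X_n)\geq\epsilon$ almost surely, while L\'evy's $0$--$1$ law gives $\mathbb{P}[A\mid\mathcal{F}_n]\to\mathbf{1}_A$ almost surely; hence $\mathbf{1}_A\geq\epsilon>0$ almost surely, and as $\mathbf{1}_A$ is $\{0,1\}$-valued this forces $\mathbb{P}[A]=1$.

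To obtain the uniform bound I would exploit the residue structure. Along any path $R_n+G_n=dn\equiv0\pmod d$, so $R_n\equiv-G_n$ and therefore $R_n\equiv0\iff G_n\equiv0\pmod d$: every reachable state is either \emph{good} (both coordinates $\equiv0$) or \emph{bad} (both $\not\equiv0$). From a good state $(r,g)$ with, say, $r\geq g$, consider the event that every subsequent draw is red, which freezes green at $g$ and keeps $r$ in the good class; its probability is $\prod_{j\geq0}\pi(r+jd,g)^d$. Since $1-\pi(r+jd,g)^d\leq d\,w_g/w_{r+jd}$ with $w_g\leq w_r$ and the good weights growing geometrically, the escape sum $\sum_{j\geq0}\bigl(1-\pi(r+jd,g)^d\bigr)$ is $O(d)$ uniformly in $(r,g)$, and a direct estimate then yields $\prod_{j\geq0}\pi(r+jd,g)^d\geq\epsilon_0$ for some $\epsilon_0=\epsilon_0(d)>0$ independent of $(r,g)$. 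From a bad state $(r,g)$ all weights in play equal $1$, so the single draw of $a$ red balls with $a\equiv-r\pmod d$ has probability $\binom{d}{a}2^{-d}\geq2^{-d}$ and lands in a good state, from which fixation has probability at least $\epsilon_0$. Thus $\epsilon=2^{-d}\epsilon_0$ works.

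The hard part is precisely this uniform lower bound: one must check that the fixation probability from a good state does not decay to $0$ as the state grows or becomes imbalanced. This is where both features of the construction are used — the geometric growth of the good weights keeps the escape sum bounded no matter how far out $(r,g)$ sits, and $d\geq2$ is what makes the one-step residue correction from a bad to a good state possible (for $d=1$ a single ball cannot repair the obstruction, consistent with Davis' proposition that $\mathbb{P}[A]=0$ there). The remaining points — that $A$ is genuinely a tail event for this chain and the elementary product estimate behind $\epsilon_0$ — are routine.
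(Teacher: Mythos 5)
Your proof is correct and follows essentially the same strategy as the paper's: an explicit weight sequence that is constant (hence non-summable reciprocals) on one residue class mod $d$ and geometric on the other, a uniform positive lower bound on the fixation probability from every reachable state, and L\'evy's $0$--$1$ law to upgrade this to probability one. The only difference is that your construction is the mirror image of the paper's (the paper puts $w_i=1$ on multiples of $d$ and $\rho^i$ elsewhere, so its favorable states are those with both coordinates \emph{not} divisible by $d$, reached after a geometric number of steps, whereas you repair the residue in a single draw of probability at least $2^{-d}$); both versions work for the same reasons.
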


\begin{proof}
We construct a counterexample. Fix $\rho>1$ and define for instance $(w_i)_{i\in\N\cup\{0\}}$ as follows:
\[
  w_i = \left\{
          \begin{array}{ll}
            1 & \qquad \text{if } i\text{ is a multiple of }d; \\
            \rho^i & \qquad \text{else.}\\
          \end{array}
        \right.
\]
Obviously this $w$ does not satisfy (\ref{SRH}) since there are infinitely many $i$ such that $w_i=1$. The proof proceeds in two steps:
\begin{enumerate}[i)]
\item There exist almost surely infinitely many $n\geq 1$ such that both $R_n$ and $G_n$ are not multiple of $d$.
\item For each time $n$ such that $R_n$ and $G_n$ are not multiple of $d$, suppose without loss of generality that $R_n\geq G_n$. There is a positive probability uniformly bounded away from $0$ that the urn draws only red balls after time $n$.
\end{enumerate}
Lemma \ref{lemme1} follows from those two points by L\'{e}vy 0-1 law.

\noindent \textbf{First step.} For each $n$ such that $R_n$ is a multiple of $d$, the probability for $R_{n+1}$ to be also a multiple of $d$ equals the probability that all the balls draw at time $n+1$ have the same colour. It equals $2\times (1/2)^d=2^{1-d}<1$ (there is $2$ choices for the colour and then each of the $d$ balls is of this colour with probability $\pi(R_n,G_n)=\pi(G_n,R_n)=\frac{1}{1+1}=1/2$ since $w_i=1$ when $i$ is a multiple of $d$). Therefore after geometric one half many steps $R_m$ is not anymore a multiple of $d$. This proves the first point.

\noindent \textbf{Second step.} Suppose that $R_n$ is not a multiple of $d$. We use classical expectation calculations to find a lower bound uniform in $n$ for the probability to draw always the same colour after time $n$. Let us suppose without loss of generality that $R_n\geq G_n$, so that on $\{R_n\not\in d\N\}\cup\{R_n\geq G_n\}$ the probability to draw only red balls after time $n$ is
\begin{eqnarray*}
\Prob\left[\left.R_{n+k}=R_{n+k-1}+d,\forall k\geq 1\right|\mathcal{F}_n\right]&=&\E\left[\left.\prod_{k=1}^\infty\mathbb{1}_{\left\{R_{n+k}=R_{n+k-1}+d\right\}}\right|\mathcal{F}_n\right];\\
&=&\lim_{K\to\infty}\E\left[\left.\prod_{k=1}^K\mathbb{1}_{\left\{R_{n+k}=R_{n+k-1}+d\right\}}\right|\mathcal{F}_n\right];
\end{eqnarray*}

by monotone convergence theorem. An then
\begin{eqnarray*}
\Prob\left[\left.R_{n+k}=R_{n+k-1}+d,\forall k\geq 1\right|\mathcal{F}_n\right]&=&\lim_{K\to\infty}\E\left[\left.\E\left[\left.\mathbb{1}_{\left\{R_{n+K}=R_{n+K-1}+d\right\}}\right|\mathcal{F}_{n+K-1}\right]\prod_{k=1}^{K-1}\mathbb{1}_{\left\{R_{n+k}=R_{n+k-1}+d\right\}}\right|\mathcal{F}_n\right];\\
&=&\lim_{K\to\infty}\E\left[\left.\left(\frac{\rho^{R_{n+K-1}}}{\rho^{R_{n+K-1}}+\rho^{G_{n+K-1}}}\right)^d\prod_{k=1}^{K-1}\mathbb{1}_{\left\{R_{n+k}=R_{n+k-1}+d\right\}}\right|\mathcal{F}_n\right];\\
&=&\lim_{K\to\infty}\E\left[\left.\left(\frac{1}{1+\rho^{G_{n}-R_{n}-(K-1)d}}\right)^d\prod_{k=1}^{K-1}\mathbb{1}_{\left\{R_{n+k}=R_{n+k-1}+d\right\}}\right|\mathcal{F}_n\right];\\
&\geq&\lim_{K\to\infty}\E\left[\left.\left(\frac{1}{1+\rho^{-(K-1)d}}\right)^d\prod_{k=1}^{K-1}\mathbb{1}_{\left\{R_{n+k}=R_{n+k-1}+d\right\}}\right|\mathcal{F}_n\right];\\
\end{eqnarray*}
where we use the fact that, on $\bigcap_{k=1}^{K-1}\left\{R_{n+k}=R_{n+k-1}+d\right\}$, $G_{n+K-1}-R_{n+K-1}=G_n-R_n-(K-1)d$ and where the last inequality is due to $R_n\geq G_n$. So by induction
\begin{eqnarray*}
\Prob\left[\left.R_{n+k}=R_{n+k-1}+d,\forall k\geq 1\right|\mathcal{F}_n\right]&\geq&\lim_{K\to\infty}\left(\frac{1}{1+\rho^{-(K-1)d}}\right)^d\E\left[\left.\prod_{k=1}^{K-1}\mathbb{1}_{\left\{R_{n+k}=R_{n+k-1}+d\right\}}\right|\mathcal{F}_n\right];\\
&\geq&\lim_{K\to\infty}\prod_{k=1}^{K}\left(\frac{1}{1+\rho^{-(k-1)d}}\right)^d=\left(\prod_{k=1}^{\infty}\frac{1}{1+\rho^{-(k-1)d}}\right)^d>0.\\
\end{eqnarray*}
This lower bound is positive and uniform in $n$, so it proves the proposition.
\end{proof}

\section{Proof of Theorem \ref{theoreme}}

To simplify notations, denote by $\mathfrak{c}_k$ the colour that is drawn in the urn at time $k$:
$$\mathfrak{c}_k=\left\{\begin{array}{ll}
\mathfrak{r}&\qquad\text{if}\quad\tilde{R}_{k}=\tilde{R}_{k-1}+1;\\
\mathfrak{g}&\qquad\text{if}\quad\tilde{G}_{k}=\tilde{G}_{k-1}+1.\\
\end{array}\right.$$
Then, define the following process:

$$N_k:=\sum_{\ell=1}^k \left( \frac{\mathbb{1}_{\{\mathfrak{c}_\ell=\mathfrak{r}\}}}{w_{\tilde{R}_{l-1}}}-\frac{\mathbb{1}_{\{\mathfrak{c}_\ell=\mathfrak{g}\}}}{w_{\tilde{G}_{l-1}}}\right)=\sum_{i=1}^{\tilde{R}_{k}}\frac{1}{w_{i-1}}-\sum_{i=1}^{\tilde{G}_{k}}\frac{1}{w_{i-1}},$$

The two reasons for which the process $N$ is interesting are on the one hand its relation to the event $A$:
$$A^c=\left\{R_k\underset{k\to\infty}{\longrightarrow}\infty\text{ and }G_k\underset{k\to\infty}{\longrightarrow}\infty\right\}=\left\{N_k\underset{k\to\infty}{\longrightarrow}0\right\}.$$
\noindent and on the other hand the fact that it is closely related to the other process
$$M_k:=\sum_{\ell=1}^k \left( \frac{\mathbb{1}_{\{\mathfrak{c}_\ell=\mathfrak{r}\}}}{w_{\tilde{R}_{\lfloor l\rfloor_d}}}-\frac{\mathbb{1}_{\{\mathfrak{c}_\ell=\mathfrak{g}\}}}{w_{\tilde{G}_{\lfloor l\rfloor_d}}}\right),$$
\noindent which is a martingle because
\begin{eqnarray*}
\mathbb{E}\left[\left.M_{k}-M_{k-1}\right|\mathcal{F}_{k-1}\right]&=&\frac{1}{w_{\tilde{R}_{\lfloor k\rfloor_d}}}\mathbb{P}\left[\left.\mathfrak{c}_k=\mathfrak{r}\right|\mathcal{F}_{k-1}\right]-\frac{1}{w_{\tilde{G}_{\lfloor k\rfloor_d}}}\mathbb{P}\left[\left.\mathfrak{c}_k=\mathfrak{g}\right|\mathcal{F}_{k-1}\right]\\
&=&\frac{1}{w_{\tilde{R}_{\lfloor k\rfloor_d}}}\frac{w_{\tilde{R}_{\lfloor k\rfloor_d}}}{w_{\tilde{R}_{\lfloor k\rfloor_d}}+w_{\tilde{G}_{\lfloor k\rfloor_d}}}-\frac{1}{w_{\tilde{G}_{\lfloor k\rfloor_d}}}\frac{w_{\tilde{G}_{\lfloor k\rfloor_d}}}{w_{\tilde{R}_{\lfloor k\rfloor_d}}+w_{\tilde{G}_{\lfloor k\rfloor_d}}}=0,
\end{eqnarray*}
and will be very helpful in the proof. Note that the two quantities

$$\lim_{k\to\infty}N_k\text{ and }\lim_{k\to\infty}M_k$$

\noindent exist almost surely and are finite since both $N$ and $M$ are defined as differences of two non-decreasing sequences with a finite upper bound. We denote these two limits by $N_\infty$ and $M_\infty$ respectively. If $d=1$ then $M=N$ and in that case one can check that the assumption that $w$ is non-decreasing is not needed for this argument to work.

Thus, to prove Theorem \ref{theoreme}, we have to prove that $N_\infty\neq 0$ almost surely.

Let us first give a rough sketch of the proof. We define
$$X_k:=\min(R_{\lfloor k \rfloor_d},G_{\lfloor k \rfloor_d})~~~\text{   and   }~~~B_k:=\sum_{i=X_k}^{\infty}\frac{1}{w_i^2},$$
\noindent which will be an interesting quantity to compare the variance of $N_\infty-N_k$ with. A rough sketch of the proof of Theorem \ref{theoreme} is given by the following picture.

\begin{center}
	\begin{pspicture}(-1.5,-2.7)(13.6,5)
			\psline[linewidth=1pt]{->}(0,0)(9,0)
			\psline[linewidth=1pt]{->}(0,-.3)(0,3.5)
			\psline[linewidth=1pt, linecolor=darkred]{-}(0,1)(3,1)
			
			\psline[linewidth=1pt, linecolor=darkblue]{-}(3,.5)(7,.5)
			\psline[linewidth=1pt, linecolor=lightblue]{-}(0,.5)(3,.5)
			
			\psline[linewidth=.5pt, linecolor=black]{-}(2.6,0)(2.6,1)
			
			\psline[linewidth=1pt, linecolor=red]{-}(0.1,0.1)(0.2,0.2)(0.3,0.1)(0.4,0.2)(0.5,0.3)(0.6,0.4)(0.7,0.3)(0.8,0.35)(0.9,0.35)(1.1,0.3)(1.2,0.4)(1.3,0.38)(1.4,0.45)(1.5,0.6)(1.6,0.55)(1.7,0.6)(1.8,0.7)(1.9,0.82)(2,0.8)(2.1,0.9)(2.2,0.95)(2.3,0.93)(2.4,0.97)(2.5,1)(2.6,1)
			\psline[linewidth=1pt, linecolor=blue]{-}(2.6,1)(2.7,1.1)(2.8,1.25)(2.9,1.35)(3.1,1.3)(3.2,1.4)(3.3,1.6)(3.4,1.7)(3.5,1.8)(3.6,1.75)(3.7,1.75)(3.8,2)(3.9,2.1)(4,2.15)(4.1,2.1)(4.2,2.13)(4.3,2.1)(4.4,2.18)(4.5,2.15)(4.6,2.3)(4.7,2.2)(4.8,2.25)(4.9,2.3)(5.1,2.5)(5.2,2.6)(5.3,2.65)(5.4,2.75)(5.5,2.7)(5.6,2.67)(5.7,2.7)(5.8,2.72)(5.9,2.8)(6,2.76)(6.1,2.85)(6.2,2.88)(6.3,2.9)(6.4,2.89)(6.5,2.9)(6.6,2.9)(6.7,2.9)(6.8,2.9)(6.9,2.9)(7,2.9)(7.5,2.9)
			\psline[linewidth=1pt, linecolor=red]{-}(2.6,1)(2.7,.9)(2.8,1)(2.9,1.1)(3.1,1.15)(3.2,1.3)(3.3,1.2)(3.4,1.15)(3.5,1.2)(3.6,1.05)(3.7,.9)(3.8,.8)(3.9,.85)(4,.7)(4.1,.65)(4.2,.8)(4.3,.87)(4.4,.98)(4.5,1.2)(4.6,1.1)(4.7,1.2)(4.8,1.25)(4.9,1.3)(5.1,1.45)(5.2,1.4)(5.3,1.3)(5.4,1.38)(5.5,1.4)(5.6,1.42)(5.7,1.35)(5.8,1.4)(5.9,1.38)(6,1.4)(6.1,1.41)(6.2,1.4)(6.3,1.39)(6.4,1.4)(6.5,1.4)(6.6,1.4)(6.7,1.4)(6.8,1.4)(6.9,1.4)(7,1.4)(7.5,1.4)
			
			\psline[linewidth=.5pt, linecolor=black]{<->}(7,2.9)(7,1.4)

			\rput(5,1){\footnotesize\red $|N|$}
			\rput(2.7,2.2){\footnotesize\blue $|M-M_k+N_k|$}
			
			\rput(2.6,-.3){$k$}
			\rput(7.9,2.3){$\leq 2d/w_{X_k}$}
			
			\rput(-.7,.5){\footnotesize$\sqrt{\alpha B_k}/2$}
			\rput(-.7,1){\footnotesize$\sqrt{\alpha B_k}$}
			
			\rput(1.5,-1.3){\parbox[c]{6cm}{\footnotesize\textbf{Step 1.} Either $N$ does not go to 0 or there are infinitely many times $k$ such that $|N_k|\geq \sqrt{\alpha B_k}$.}}
			
			\rput(4,4){\parbox[c]{6cm}{\footnotesize\textbf{Step 2.} At any of those times $k$ we start a new process that has the increments of $M$. With positive probability, this process stays above $\sqrt{\alpha B_k}/2$.}}

			\rput(10.8,1){\parbox[c]{6.2cm}{\footnotesize\textbf{Step 3.} The difference between the two processes is less than $2d/w_{X_k}$ so if $2d/w_{X_k}<\sqrt{\alpha B_k}/2$ it proves Theorem \ref{theoreme}.}}
			
			\rput(9.5,-1.7){\parbox[c]{6.2cm}{\footnotesize\textbf{Step 4.} If $2d/w_{X_k}\geq\sqrt{\alpha B_k}/2$ it means that the reinforcement is very strong and there is a much simpler direct proof of Theorem \ref{theoreme}.}}
			
	\end{pspicture}
	\end{center}
	
For the proof to work, we have to choose $\alpha$ for instance equal to $(24+16d s_\infty)^{-2}$. We will see in the proof how this quantity appears.

Let us start with the step 1 that will be given by Lemma \ref{lemme2}. For all $k\geq 0$ we define:
$$T_k:=\min\left\{\ell\geq k: N_\ell^2\geq\alpha B_k \right\},$$
\noindent and
$$\mathcal{J}:=\left\{T_k<\infty\right\}\cup\left\{N_\infty\neq0\right\}=\left\{N_\infty^{T_k}\neq0\right\}.$$

\begin{lemme}\label{lemme2} We have
$$\mathbb{P}\left[\left.\mathcal{J}\right|\mathcal{F}_k\right]\geq \frac{1}{12}>0.$$
In words it means that with a positive probability uniformly bounded away from 0 (the value 1/12 has actually no importance), the process $N$ either does not go to 0, or exits the interval $[-\sqrt{\alpha B_k},\sqrt{\alpha B_k}]$.
\end{lemme}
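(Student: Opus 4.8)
The plan is to reduce the statement to a fluctuation estimate for the martingale $M$ and then transfer it back to $N$. Condition throughout on $\mathcal{F}_k$. If $N_k^2\geq\alpha B_k$ then $T_k=k<\infty$ and $\mathcal{J}$ holds surely, so I may assume $|N_k|<\sqrt{\alpha B_k}$. In that case $\mathcal{J}\supseteq\{T_k<\infty\}$, and $T_k<\infty$ occurs as soon as $|N_\ell-N_k|\geq 2\sqrt{\alpha B_k}$ for some $\ell\geq k$ (then $|N_\ell|\geq 2\sqrt{\alpha B_k}-|N_k|>\sqrt{\alpha B_k}$). Using the elementary bound $|(N_\ell-N_k)-(M_\ell-M_k)|\leq 2d/w_{X_k}$ for $\ell\geq k$ — which is where the assumption that $w$ is non-decreasing enters, via a telescoping estimate on the per-block discrepancy between the weights $w_{\tilde R_{\ell-1}}$ and the frozen weights $w_{\tilde R_{\lfloor\ell\rfloor_d}}$ — it then suffices to prove that, with probability at least $1/12$, one has $|M_\ell-M_k|\geq a:=2\sqrt{\alpha B_k}+2d/w_{X_k}$ for some $\ell\geq k$.

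The key input, and the step I expect to be the main obstacle, is a two-sided comparison of the remaining predictable quadratic variation of $M$ with $B$. A direct computation gives the per-block conditional variance $\E[(M_{d(n+1)}-M_{dn})^2\mid\mathcal{F}_{dn}]=d/(w_{R_n}w_{G_n})$, so $\langle M\rangle_\infty-\langle M\rangle_k=\sum_{n}d/(w_{R_n}w_{G_n})$ over the future blocks. I would establish a lower bound $\E[\langle M\rangle_\infty-\langle M\rangle_k\mid\mathcal{F}_k]\geq c_1 B_k$ and a uniform upper bound $\E[\langle M\rangle_\infty-\langle M\rangle_\ell\mid\mathcal{F}_\ell]\leq C B_\ell$ for every $\ell$. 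Both follow from an occupation estimate on the minority count $\min(R_n,G_n)$: when it sits at level $m$ and the majority colour has weight $w$, the chance of incrementing it in a block is of order $d\,w_m/w$, so the expected sojourn at level $m$ is of order $w/(d w_m)$, each of its blocks contributing $d/(w_m w)$ to the bracket, for a net contribution of order $1/w_m^2$ per level; summing over $m\geq X_k$ reproduces $B_k$. Making this rigorous (the majority weight itself drifts during the sojourn) is the technical heart, and is again where monotonicity of $w$ is used to keep the per-level contributions comparable.

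Granting these two bracket estimates, I would run an optional-stopping argument at $\tau:=\inf\{\ell\geq k:|M_\ell-M_k|\geq a\}$. The stopped martingale obeys $|M_{\tau\wedge\infty}-M_k|\leq a+1/w_{X_k}$ (the increments are at most $1/w_{X_k}$, since every frozen count after time $k$ is at least $X_k$), whence $\E[\langle M\rangle_\tau-\langle M\rangle_k\mid\mathcal{F}_k]=\E[(M_{\tau\wedge\infty}-M_k)^2\mid\mathcal{F}_k]\leq(a+1/w_{X_k})^2$. Splitting the total bracket at $\tau$ and using the uniform upper bound together with $B_\tau\leq B_k$ (the sequence $B_\ell$ is non-increasing, as $X_\ell$ is non-decreasing) gives $c_1 B_k\leq(a+1/w_{X_k})^2+C B_k\,\Prob[\tau<\infty\mid\mathcal{F}_k]$, so that $\Prob[\tau<\infty\mid\mathcal{F}_k]\geq(c_1 B_k-(a+1/w_{X_k})^2)/(C B_k)$. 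In the regime $2d/w_{X_k}<\tfrac12\sqrt{\alpha B_k}$ one has $a<\tfrac52\sqrt{\alpha B_k}$, so $(a+1/w_{X_k})^2$ is only a small multiple of $\alpha B_k$; this is precisely where the choice $\alpha=(24+16d s_\infty)^{-2}$ is calibrated, to force the right-hand side to be at least $1/12$. Since $\{\tau<\infty\}\subseteq\{T_k<\infty\}\subseteq\mathcal{J}$, this settles the lemma in this regime.

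It remains to treat the complementary regime $2d/w_{X_k}\geq\tfrac12\sqrt{\alpha B_k}$, in which the transfer from $M$ to $N$ is too lossy; but this inequality forces the minority count $X_k$ to be bounded by a constant depending only on $d$ and $\alpha$, that is, the reinforcement is felt to be very strong. Here I would argue directly, as in the proof of Proposition \ref{lemme1}: with probability bounded away from $0$ the urn never again draws the minority colour after time $\lfloor k\rfloor_d$, so that one of $\tilde R,\tilde G$ stays finite and $N_\infty$ equals a strictly positive tail of $\sum 1/w_i$, giving $N_\infty\neq 0$ and hence $\mathcal{J}$. Combining the two regimes yields $\Prob[\mathcal{J}\mid\mathcal{F}_k]\geq 1/12$ in all cases.
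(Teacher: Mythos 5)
Your overall architecture (prove a fluctuation estimate for the martingale $M$ and transfer it back to $N$ via the $2d/w_{X_k}$ comparison) is not the paper's, and the step you yourself flag as ``the technical heart'' is a genuine gap rather than a deferred routine verification. The crux is the claimed lower bound $\E[\langle M\rangle_\infty-\langle M\rangle_k\mid\mathcal{F}_k]\geq c_1B_k$. Your occupation heuristic computes the expected sojourn of the minority count at level $m$ as if that sojourn were almost surely finite; but the sojourn at level $m$ is infinite exactly on the fixation event, which is the event the theorem asserts has full probability. Redoing the computation, the expected contribution of level $m$ to the bracket is of order $\Prob[\text{minority ever reaches }m+1\mid\mathcal{F}_k]\cdot w_m^{-2}$, not $w_m^{-2}$, so your lower bound amounts to requiring that the minority count reach, with probability bounded below, every level carrying the mass of $B_k$ --- essentially the negation of the theorem's conclusion at those scales. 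Without the balance reduction the bound is simply false ($B_k$ depends only on $\min(R_k,G_k)$, so taking $R_k$ small and $G_k$ huge makes the expected future bracket astronomically smaller than $B_k$), and with the reduction $N_k^2<\alpha B_k$ it is at least as delicate as the lemma itself; nothing in your sketch uses the balance assumption at the point where it would be needed. The upper bound $\E[\langle M\rangle_\infty-\langle M\rangle_\ell\mid\mathcal{F}_\ell]\leq CB_\ell$ can be pushed through by this kind of argument, but your optional-stopping inequality needs both bounds, and in addition needs $c_1/C$ bounded below, over which the sketch gives no control.

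Two smaller problems. In the complementary regime you assert that $2d/w_{X_k}\geq\tfrac12\sqrt{\alpha B_k}$ forces $X_k$ to be bounded by a constant; it does not --- it says $1/w_{X_k}^2\geq\tfrac{\alpha}{16d^2}B_k$, a statement about the local decay of the tail sum $B$ that holds at every level for, e.g., $w_i=\rho^i$. The direct fixation argument you then invoke requires $w_{X_k}\sum_{\ell}1/w_\ell$ to be bounded, which the paper only extracts from the inequality $1/w_j^2\geq\varepsilon\sum_{i\geq j}1/w_i^2$ holding for \emph{all} large $j$ simultaneously (it is iterated three times before Cauchy--Schwarz); a single instance at time $k$ does not suffice. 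For contrast, the paper's proof of this lemma never touches $M$ and needs no expected lower bound on any bracket: it works with the stopped process $N^{T_k}$ itself and observes that on $\mathcal{J}^c$ both colours are drawn infinitely often, so the pathwise sum $\sum_{i\geq k}(N^{T_k}_{i+1}-N^{T_k}_i)^2$ dominates $B_k$ deterministically on that event; the cross term in the expansion $(N_{i+1}-N_i)^2=N_{i+1}^2-N_i^2-2N_i(N_{i+1}-N_i)$ is then controlled by the bound on $|N_i^{T_k}|$ before $T_k$ together with the total-variation bound $\sum_i|N_{i+1}-N_i|\leq 2ds_\infty$, which is exactly where $\alpha=(24+16ds_\infty)^{-2}$ is calibrated. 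This is why the paper can prove the lemma without monotonicity of $w$; the comparison with $M$ and the non-decreasing hypothesis only enter in the later steps of the theorem.
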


\begin{proof} For some convenient reason we will focus our analysis on the stopped process $N^{T_k}$ instead of $N$.
To prove this lemma we will use two calculations using the variance of $N^{T_k}$. The first one is as follows:
\begin{eqnarray*}
B_k\times\mathbb{P}\left[\left.\mathcal{J}^c\right|\mathcal{F}_k\right]&=&\mathbb{E}\left[\left.\mathbb{1}_{\mathcal{J}^c}B_k\right|\mathcal{F}_k\right]\\
&=&\mathbb{E}\left[\left.\mathbb{1}_{\mathcal{J}^c}\sum_{i=X_k}^{\infty}\frac{1}{w_i^2}\right|\mathcal{F}_k\right]\\
&\leq&\mathbb{E}\left[\left.\mathbb{1}_{\mathcal{J}^c}\sum_{i=k}^{\infty}\left(N_{i+1}^{T_k}-N_i^{T_k}\right)^2\right|\mathcal{F}_k\right],\\
\end{eqnarray*}
because on $\mathcal{J}^c$, we have $T_k=\infty$ so any $1/w_i^2$ from $X_k$ to $\infty$ appears at leas once in the last sum. Then
\begin{eqnarray*}
B_k\times\mathbb{P}\left[\left.\mathcal{J}^c\right|\mathcal{F}_k\right]&\leq&\mathbb{E}\left[\left.\sum_{i=k}^{\infty}\left(N_{i+1}^{T_k}-N_i^{T_k}\right)^2\right|\mathcal{F}_k\right]\\
&=&\mathbb{E}\left[\left.\sum_{i=k}^{\infty}\left((N_{i+1}^{T_k})^2-(N_i^{T_k})^2-2N_i^{T_k}\left(N_{i+1}^{T_k}-N_i^{T_k}\right)\right)\right|\mathcal{F}_k\right]\\
&=&\mathbb{E}\left[\left.\sum_{i=k}^{\infty}(N_{i+1}^{T_k})^2-(N_i^{T_k})^2\right|\mathcal{F}_k\right]-2\mathbb{E}\left[\left.\sum_{i=k}^{\infty}N_i^{T_k}\left(N_{i+1}^{T_k}-N_i^{T_k}\right)\right|\mathcal{F}_k\right]\\
&\leq&\mathbb{E}\left[\left.\sum_{i=k}^{\infty}(N_{i+1}^{T_k})^2-(N_i^{T_k})^2\right|\mathcal{F}_k\right]+4d s_\infty \alpha B_k,\\
\end{eqnarray*}
where the last inequality is due to the upper bound:
\begin{eqnarray*}
\left|\mathbb{E}\left[\left.\sum_{i=k}^{\infty}N_i^{T_k}\left(N_{i+1}^{T_k}-N_i^{T_k}\right)\right|\mathcal{F}_k\right]\right|&\leq&\mathbb{E}\left[\left.\sum_{i=k}^{\infty}\left|N_i^{T_k}\right|\left|N_{i+1}^{T_k}-N_i^{T_k}\right|\right|\mathcal{F}_k\right]\\
&\leq& \alpha B_k\mathbb{E}\left[\left.\sum_{i=k}^{\infty}\left|N_{i+1}^{T_k}-N_i^{T_k}\right|\right|\mathcal{F}_k\right]\leq 2ds_{\infty}\alpha B_k,\\
\end{eqnarray*}
because in the last sum, each term of the form $1/w_k$ appears at most $2d$ times ($d$ times for drawings of red balls and $d$ times for drawings of green balls). Then :

\begin{eqnarray*}
B_k\times\mathbb{P}\left[\left.\mathcal{J}^c\right|\mathcal{F}_k\right]&\leq&\mathbb{E}\left[\left.\sum_{i=k}^{\infty}(N_{i+1}^{T_k})^2-(N_i^{T_k})^2\right|\mathcal{F}_k\right]+4d s_\infty \alpha B_k\\
&\leq&\mathbb{E}\left[\left.(N_\infty^{T_k})^2-(N_k^{T_k})^2\right|\mathcal{F}_k\right]+4d s_\infty \alpha B_k\\
&\leq&\mathbb{E}\left[\left.\left(N_\infty^{T_k}-N_k^{T_k}\right)^2\right|\mathcal{F}_k\right]+2N_k^{T_k}\mathbb{E}\left[\left.\left(N_\infty^{T_k}-N_k^{T_k}\right)\right|\mathcal{F}_k\right]+4d s_\infty \alpha B_k\\
&\leq&\mathbb{E}\left[\left.\left(N_\infty^{T_k}-N_k^{T_k}\right)^2\right|\mathcal{F}_k\right]+2\sqrt{\alpha B_k}\left(2\sqrt{\alpha B_k}+\sqrt{B_k}\right)+4d s_\infty \alpha B_k\\
&\leq&(N_k^{T_k})^2\mathbb{P}\left[\left.\mathcal{J}^c\right|\mathcal{F}_k\right]+\mathbb{E}\left[\left.\left(N_\infty^{T_k}-N_k^{T_k}\right)^2\mathbb{1}_{\mathcal{J}}\right|\mathcal{F}_k\right]+B_k(4\alpha+2\sqrt{\alpha}+4d\alpha s_\infty)\\
&\leq&(N_k^{T_k})^2\mathbb{P}\left[\left.\mathcal{J}^c\right|\mathcal{F}_k\right]+\mathbb{P}\left[\left.\mathcal{J}\right|\mathcal{F}_k\right]\left(2\left(\sqrt{\alpha B_k}+\sqrt{B_k}\right)^2+2(N_k^{T_k})^2\right)\\
&&+B_k(4\alpha+2\sqrt{\alpha}+4d\alpha s_\infty).\\
\end{eqnarray*}
So we have:
$$\mathbb{P}\left[\left.\mathcal{J}^c\right|\mathcal{F}_k\right]\left(B_k-N_k^2\right)\leq\mathbb{P}\left[\left.\mathcal{J}\right|\mathcal{F}_k\right]\left(2\alpha B_k+4\sqrt{\alpha} B_k + B_k+2N_k^2\right)+B_k(4\alpha+2\sqrt{\alpha}+4d\alpha s_\infty)$$

\noindent Using $\mathbb{P}\left[\left.\mathcal{J}^c\right|\mathcal{F}_k\right]=1-\mathbb{P}\left[\left.\mathcal{J}\right|\mathcal{F}_k\right]$ this could be rewritten as:

$$B_k-N_k^2\leq\mathbb{P}\left[\left.\mathcal{J}\right|\mathcal{F}_k\right]\left(2\alpha B_k+4\sqrt{\alpha} B_k + 3B_k + N_k^2\right)+B_k(4\alpha+2\sqrt{\alpha}+4d\alpha s_\infty).$$

And so, on $\{N_k^2\leq B_k/2\}$ we have:

$$\mathbb{P}\left[\left.\mathcal{J}\right|\mathcal{F}_k\right]\geq\frac{\frac{1}{2}-4\alpha-2\sqrt{\alpha}-4d\alpha s_\infty}{2\alpha+4\sqrt{\alpha}+3+\frac{1}{2}}.$$
Any $\alpha$ small enough so that the last quantity is in $(0,1)$ would suffice here. For instance with the choice $\alpha=(24+16d s_\infty)^{-2}$ we obtain $\mathbb{P}\left[\left.\mathcal{J}\right|\mathcal{F}_k\right]\geq1/12$ and Lemma \ref{lemme2}.
\end{proof}

By L\'evy 0-1 law, Lemma \ref{lemme2} gives that almost surely either $N_\infty\neq 0$ or there are infinitely many times $k_0$ such that $|N_{k_0}|\geq \sqrt{\alpha B_{k_0}}$. It is now time to compare $N$ with the martingale $M$. More precisely for any $k_0$ we introduce the process $M^{k_0}_k=M_k-M_{k_0}+N_{k_0}$ for $k\geq {k_0}$ which is equal to $N$ at time $k={k_0}$ and has the increments of $M$ after time $k_0$.

Let us start with an upper bound for the difference between the increments of $N$ and $M$:
\begin{eqnarray*}
\left|M^{k_0}_{k_0+k}-N_{k_0+k}\right|&=&\left|\left(M_{k_0+k}-M_{k_0}\right)-\left(N_{k_0+k}-N_{k_0}\right)\right|\\
&=&\left|\sum_{\ell=k_0+1}^{k_0+k}\left( \frac{\mathbb{1}_{\{\mathfrak{c}_\ell=\mathfrak{r}\}}}{w_{\tilde{R}_{\lfloor l\rfloor_d}}}-\frac{\mathbb{1}_{\{\mathfrak{c}_\ell=\mathfrak{g}\}}}{w_{\tilde{G}_{\lfloor l\rfloor_d}}}\right)-\sum_{\ell={k_0}+1}^{{k_0}+k}\left( \frac{\mathbb{1}_{\{\mathfrak{c}_\ell=\mathfrak{r}\}}}{w_{\tilde{R}_{l-1}}}-\frac{\mathbb{1}_{\{\mathfrak{c}_\ell=\mathfrak{g}\}}}{w_{\tilde{G}_{l-1}}}\right)\right|\\
&\leq&\left|\sum_{\ell={k_0}+1}^{{k_0}+k} \frac{\mathbb{1}_{\{\mathfrak{c}_\ell=\mathfrak{r}\}}}{w_{\tilde{R}_{\lfloor l\rfloor_d}}}-\frac{\mathbb{1}_{\{\mathfrak{c}_\ell=\mathfrak{r}\}}}{w_{\tilde{R}_{l-1}}}\right|+\left|\sum_{\ell={k_0}+1}^{{k_0}+k} \frac{\mathbb{1}_{\{\mathfrak{c}_\ell=\mathfrak{g}\}}}{w_{\tilde{G}_{\lfloor l\rfloor_d}}}-\frac{\mathbb{1}_{\{\mathfrak{c}_\ell=\mathfrak{g}\}}}{w_{\tilde{G}_{l-1}}}\right|\\
\end{eqnarray*}

Let us find an upper bound for the first term of this sum. The same upper bound will hold for the second term. First notice that since $w$ is non-decreasing this is a sum of positive numbers so the absolute value is not needed. Then

\begin{eqnarray*}
\sum_{\ell={k_0}+1}^{{k_0}+k} \frac{\mathbb{1}_{\{\mathfrak{c}_\ell=\mathfrak{r}\}}}{w_{\tilde{R}_{\lfloor l\rfloor_d}}}-\frac{\mathbb{1}_{\{\mathfrak{c}_\ell=\mathfrak{r}\}}}{w_{\tilde{R}_{l-1}}}&\leq&\sum_{\ell=\lfloor {k_0}+1\rfloor_d\,+1}^{\infty} \frac{\mathbb{1}_{\{\mathfrak{c}_\ell=\mathfrak{r}\}}}{w_{\tilde{R}_{\lfloor l\rfloor_d}}}-\frac{\mathbb{1}_{\{\mathfrak{c}_\ell=\mathfrak{r}\}}}{w_{\tilde{R}_{l-1}}}\\
&\leq&\sum_{\ell=\lfloor {k_0}+1\rfloor_d\,+1}^{\infty} \frac{1}{w_{\tilde{R}_{\lfloor \ell\rfloor_d}}}-\frac{1}{w_{\tilde{R}_{\ell-1}}}\\
&=&\sum_{\ell=\frac{\lfloor {k_0}+1\rfloor_d}{d}}^{\infty}\sum_{i=1}^{d} \frac{1}{w_{\tilde{R}_{\lfloor d\ell+i\rfloor_d}}}-\frac{1}{w_{\tilde{R}_{d\ell+i-1}}}\\
&=&\sum_{\ell=\frac{\lfloor {k_0}+1\rfloor_d}{d}}^{\infty}\sum_{i=1}^{d} \frac{1}{w_{\tilde{R}_{d\ell}}}-\frac{1}{w_{\tilde{R}_{d\ell+i-1}}}\\
&=&\sum_{i=1}^{d}\sum_{\ell=\frac{\lfloor {k_0}+1\rfloor_d}{d}}^{\infty} \frac{1}{w_{\tilde{R}_{d\ell}}}-\frac{1}{w_{\tilde{R}_{d\ell+i-1}}}\\
\end{eqnarray*}

The last identity is true because the sum is absolutely convergent. Now for each $i=1\ldots d$, the sum

$$\sum_{\ell=\frac{\lfloor k_0+1\rfloor_d}{d}}^{\infty} \frac{1}{w_{\tilde{R}_{dl}}}-\frac{1}{w_{\tilde{R}_{dl+i-1}}}$$
\noindent is alternate with the absolute values of its terms non-increasing because for each $\ell$:
$$\frac{1}{w_{\tilde{R}_{d\ell}}}\geq \frac{1}{w_{\tilde{R}_{d\ell+i-1}}}\geq\frac{1}{w_{\tilde{R}_{d(\ell+1)}}}$$
\noindent since $w$ is non-decreasing. We conclude that
$$0\leq \sum_{\ell=\frac{\lfloor k_0+1\rfloor_d}{d}}^{\infty} \frac{1}{w_{\tilde{R}_{dl}}}-\frac{1}{w_{\tilde{R}_{dl+i-1}}}\leq \frac{1}{w_{\tilde{R}_{\lfloor n+1\rfloor_d}}}$$

Then

$$\sum_{\ell=k_0+1}^{k_0+k} \frac{\mathbb{1}_{\{\mathfrak{c}_\ell=\mathfrak{r}\}}}{w_{\tilde{R}_{\lfloor l\rfloor_d}}}-\frac{\mathbb{1}_{\{\mathfrak{c}_\ell=\mathfrak{r}\}}}{w_{\tilde{R}_{l-1}}}\leq \frac{d}{w_{\tilde{R}_{\lfloor n+1\rfloor_d}}}$$

and

$$\left|M^{k_0}_{k_0+k}-N_{k_0+k}\right|\leq \frac{d}{w_{\tilde{R}_{\lfloor n+1\rfloor_d}}}+\frac{d}{w_{\tilde{G}_{\lfloor n+1\rfloor_d}}}\leq \frac{2d}{w_{\min(\tilde{R}_{\lfloor n+1\rfloor_d},\tilde{G}_{\lfloor n+1\rfloor_d})}}= \frac{2d}{w_{X_{n+1}}}\leq \frac{2d}{w_{X_{n}}}.$$

Note that this upper bound does not depend on $k$ so we can write:

$$\sup_{k\leq 0}\left|M^{k_0}_{k_0+k}-N_{k_0+k}\right|\leq \frac{2d}{w_{X_n}}.$$

Given the result of Lemma \ref{lemme2} we now wish to compare this upper bound with $\sqrt{B_n}$. The proof will now bifurcate in two cases:

\textbf{Case 1.} If $\liminf_{k\to\infty}\frac{1}{w_{X_k}^2 B_k}=0$ then $N$ is very close to $M$ so it will suffice to prove that $M$ stays far away from 0.

\textbf{Case 2.} If $\liminf_{k\to\infty}\frac{1}{w_{X_k}^2 B_k}>0$ then the reinforcement is very strong and we will be able to give a direct proof that the urn always draws the same colour after a finite time.

Let us start with the first case. Since $\liminf_{k\to\infty}\frac{1}{w_{X_k}^2 B_k}=0$ then there exist infinitely many $k$ such that

$$\frac{2d}{w_{X_k}}< \frac{\sqrt{\alpha B_k}}{2}.$$

Note that these $k$ are deterministic and Lemma \ref{lemme2} holds for each of them. So almost surely either $N$ does not go to 0 or there is infinitely many of these $k$ for which there exist a time $T(k)>k$ such that $\left|N_{T(k)}\right|\geq\alpha \sqrt{B_k}$. In the latter case let us consider one such $k$ and start at time $T(k)$ the martingale
$$M^{T(k)}_\ell = M_\ell - M_{T(k)}+N_{T(k)}, ~~\forall \ell\ge T(k).$$

Then introduce the stopping timered

$$S=\min\left\{\ell\geq T(k): M^{T(k)}_\ell\not\in\left[\frac{\sqrt{\alpha B_k}}{2},3d\sqrt{B_k}\right] \right\}.$$

We have :
$$\sqrt{\alpha B_k}\leq M^{T(k)}_{T(k)}=\mathbb{E}\left[\right.M^{T(k)}_{S}\left|\mathcal{F}_{T_k}\right]\leq \frac{\sqrt{\alpha B_k}}{2}\mathbb{P}\left[M^{T(k)}_{S}<\frac{\sqrt{\alpha B_k}}{2}\right]+(3d+1)\sqrt{B_k}\mathbb{P}\left[M^{T(k)}_{S}\geq\frac{\sqrt{\alpha B_k}}{2}\right].$$

In the last inequality, the term $(3d+1)$ is due to the fact that the martingale is stopped at time $S$ and if it goes above the upper limit $3d$ then its overshoot is at maximum the value of the last jump of $M^{T(k)}_{.\wedge S}$ which is of the form $1/w_i$ for some $i\geq X_k$ and is therefore smaller than $\sqrt{B_k}$. Then, by using $\mathbb{P}\left[M^{T(k)}_{S}\geq\sqrt{\alpha B_k}/2\right]=1-\mathbb{P}\left[M^{T(k)}_{S}<\sqrt{\alpha B_k}/2\right]$ we deduce that

$$\mathbb{P}\left[M^{T(k)}_{S}<\frac{\sqrt{B_k}}{20}\right]<\frac{3d+1-\sqrt{\alpha}}{3d+1-\sqrt{\alpha}/2}<1.$$

This means that with positive probability uniformly bounded away from 0, either $M^{T(k)}$ always stays above $\sqrt{\alpha B_k}/2$ or it goes above $3d\sqrt{B_k}$ in a finite time. In this last case the Tchebytchev inequality shows that there is also a positive probability for the martingale to stay above $\sqrt{\alpha B_k}/2$ eventually. Indeed

$$\text{Var}(\left.M_\infty-M_S\,\right|\,\mathcal{F}_S)=\mathbb{E}\left[\left.(M_\infty-M_S)^2\right|\mathcal{F}_S\right]=\mathbb{E}\left[\left.\sum_{\ell=S}^\infty\mathbb{E}\left[\left.(M_{\ell+1}-M_\ell)^2\right|\mathcal{F}_\ell\right]\right|\mathcal{F}_S\right]<2d{B_k}.$$

So we come to the conclusion that with positive probability uniformly bounded away from 0 we have $M_\infty^{T(k)}\geq \sqrt{\alpha B_k}/2>\frac{2d}{w_{X_k}}$ and since $\left|N_\infty - M^{T(k)}_\infty\right|\leq \frac{2d}{w_{X_k}}$, we can conclude that $N_\infty\neq 0$. This ends the proof in the first case.

It now remains to prove Theorem \ref{theoreme} in the second case. The proof is to compare with Corollary 3 in \cite{limictarres}. Since $\liminf_{k\to\infty}\frac{1}{w_{X_n}^2 B_n}>0$, there exists $\varepsilon>0$ and a finite time $k_0$ such that for $k\geq k_0$,
$$\frac{1}{w_{k}^2}\geq \varepsilon\sum_{i=k}^\infty \frac{1}{w_i^2}.$$
This implies, for all $k\geq k_0$, that
\begin{eqnarray*}
\frac{1}{w_k^2}&\geq&\varepsilon\sum_{i=k}^\infty \frac{1}{w_i^2}\geq \varepsilon^2\sum_{i=k}^\infty \sum_{j=i}^\infty\frac{1}{w_j^2}\geq\varepsilon^2\sum_{j=k}^\infty \frac{j-k+1}{w_j^2}\\
&\geq&\varepsilon^3\sum_{j=k}^\infty \sum_{\ell=j}^\infty \frac{j-k+1}{w_\ell^2}\geq\frac{\varepsilon^3}{2}\sum_{\ell=k}^\infty\frac{(\ell-k+1)^2}{w_{\ell}^2}.
\end{eqnarray*}

Then using the Cauchy-Scharz inequality, for all $k\geq k_0$,
\begin{eqnarray*}
\sum_{\ell=k}^\infty\frac{1}{w_\ell}&=&\sum_{\ell=k}^\infty\frac{\ell-k+1}{w_\ell}\frac{1}{\ell-k+1}\\
&\leq&\sqrt{\sum_{\ell=k}^\infty\frac{(\ell-k+1)^2}{w_\ell^2}}\sqrt{\sum_{\ell=k}^\infty\frac{1}{(\ell-k+1)^2}}\\
&\leq&\frac{\pi}{\sqrt{6}}\sqrt{\frac{2}{\varepsilon^3}}\frac{1}{w_k}.
\end{eqnarray*}

The value of the constant on the RHS does not matter here, what is important is the fact that $\limsup_{k\to \infty}w_k \sum_{\ell=k}^\infty\frac{1}{w_\ell} <\infty$. To conclude, let us calculate the probability for any $k\geq k_0$ that the urn always draws the majority colour after time $k$. Denote by $B$ this last event, then

$$\mathbb{P}\left[B\right]\geq\prod_{\ell=k}^\infty \left(\frac{w_\ell}{w_\ell+w_k}\right)^d=\left(\prod_{\ell=k}^\infty \left(1-\frac{w_k}{w_\ell+w_k}\right)\right)^d\geq\left(\prod_{\ell=k}^\infty \left(1-\frac{w_k}{2 w_\ell}\right)\right)^d.$$

The last quantity is bounded away from 0, uniformly in $k$, since

$$\sum_{\ell=k}^\infty\frac{w_k}{w_\ell}\leq \frac{\pi}{\sqrt{6}}\sqrt{\frac{2}{\varepsilon^3}}<\infty.$$

Therefore, the L\'evy 0-1 law implies that with probability one the urn always draws the same colour after a finite time. This ends the proof of the second case and of Theorem \ref{theoreme}.

\bibliographystyle{plain}
\bibliography{biblio}

\begin{thebibliography}{1}

\bibitem{davis}
B.~Davis.
\newblock Reinforced random walk.
\newblock {\em Prob. Th Rel. Fields}, 84:203--229, 1990.

\bibitem{IUM}
M.~Launay.
\newblock Interacting urn models.
\newblock {\em Preprint}, 2011.

\bibitem{limictarres}
V.~Limic and P.~Tarr\`{e}s.
\newblock Attracting edge and strongly edge reinforced random walks.
\newblock {\em Ann. Probab.}, 35(5):1783--1806, 2007.

\bibitem{pemantle_survey}
R.~Pemantle.
\newblock A survey of random processes with reinforcement.
\newblock {\em Probability surveys}, 4:1--79, 2007.

\bibitem{polya}
G.~P\'{o}lya.
\newblock Sur quelques points de la th\'{e}orie des porbabilit\'{e}s.
\newblock {\em Ann. Inst. H. Poincar\'{e}}, 1930.

\end{thebibliography}

\end{document}